\documentclass[10.5pt,reqno]{amsart}
\usepackage{amsmath,amssymb,doublespace}
\usepackage{epsfig,enumerate,multirow}
\usepackage{array}

\newtheorem{thm}{Theorem}[section]

\newtheoremstyle{theorem}
  {10pt}          
  {10pt}  
  {\normalfont}  
  {}     
  {\bf}  
  {. }    
  { }    
  {}     

\theoremstyle{theorem}
\newtheorem{dfn}{Definition}[section]
\newtheorem{rem}{Remark}[section]

\makeatletter
\def\section{\@startsection{section}{1}%
  \z@{.7\linespacing\@plus\linespacing}{.5\linespacing}%
  {\normalfont\bfseries}}

\def\@seccntformat#1{%
  \protect\textbf{\protect\@secnumfont
    \csname the#1\endcsname
    \protect\@secnumpunct
  }}

\def\@setauthors{%
  \begingroup
  \def\thanks{\protect\thanks@warning}%
  \trivlist
  \centering \small  \@topsep30\p@\relax
  \advance\@topsep by -\baselineskip
  \item\relax
  \author@andify\authors
  \def\\{\protect\linebreak}%
  \authors
  \ifx\@empty\contribs
  \else
    ,\penalty-3 \space \@setcontribs
    \@closetoccontribs
  \fi
  \endtrivlist
  \endgroup
}

\def\@settitle{\begin{center}%
  \baselineskip14\p@\relax
    \bfseries\large
  \@title
  \end{center}%
}

\def\@adjustvertspacing{%
  \bigskipamount.4\baselineskip plus.4\baselineskip
  \medskipamount\bigskipamount \divide\medskipamount\tw@
  \smallskipamount\medskipamount \divide\smallskipamount\tw@
  \abovedisplayskip\bigskipamount
  \belowdisplayskip \abovedisplayskip
  \abovedisplayshortskip\abovedisplayskip
  \advance\abovedisplayshortskip-1\abovedisplayskip
  \belowdisplayshortskip\abovedisplayshortskip
  \advance\belowdisplayshortskip 1\smallskipamount
  \jot\baselineskip \divide\jot 4 \relax
}

\makeatother

\captionindent=0pc

\numberwithin{equation}{section}

\setlength\extrarowheight{1pt}

\begin{document}

\textit{Journal of the Kerala Statistical Association}, Vol. 10,  December 1999, p. 01-07.\\

\begin{spacing}{1.35}
\title{On Geometric Infinite Divisibility}

\author[E S\lowercase{andhya and} R N P\lowercase{illai}]
{\textbf{E Sandhya}\\
Department of Statistics, Prajyoti Niketan College,
Pudukkad, Thrissur - 680 301, India.\\
\mbox{}\\
\lowercase{and}\\
\mbox{}\\
\textbf{R N Pillai}\\
Valiavilakam, Ookode P.O., Vellayani, Trivandrum, India.}

\begin{abstract}
The notion of geometric version of an infinitely divisible law is introduced. Concepts parallel to attraction and partial attraction 
are developed and studied in the setup of geometric summing of random  variables.
\end{abstract}

\maketitle

\section{Inroduction}
Klebanov, \textit{et al.} \cite{kl-et-84} deﬁned;

\begin{dfn}\label{dfn-1.1} 
A random variable (r.v) $X$ is geometrically infinitely divisible
(GID) if for every $p\in (0,1)$, $X\overset{d}{=} X^{(p)}_1+
\ldots + X^{(p)}_{N_p}$, where $N_p$ and $\{X^{(p)}_i\}$ are
independent, $\{X^{(p)}_i\}$ are i.i.d and $N_p$ is a geometric 
r.v  with mean $1/p$.
\end{dfn}

Equivalently, a r.v $X$ with characteristic function (c.f) $\phi(t)$ is GID if and only if $\exp\{1-1/\phi(t)\}$ is an infinitely divisible (ID) c.f. They also introduced geometrically strictly stable (GSS) laws as:

\begin{dfn}\label{dfn-1.2} 
A r.v $Y$ is GSS if for every $p\in (0,1)$, there exists a constant
$c(p) > 0$ such that $Y\overset{d}{=} c(p)\{X_1+\ldots+X_{N_p}\}$, where $N_p$ and $\{X_i\}$ are independent, $\{X_i\}$ are i.i.d, 
$Y\overset{d}{=} X_1$ and $N_p$ is a geometric r.v with mean $1/p$.
\end{dfn}

Pillai \cite{pi-85} introduced semi-$\alpha$-Laplace laws as:

\begin{dfn} 
A distribution function (d.f) $F$ with c.f 
$\phi(t)=1/(1+g(t))$ is semi-$\alpha$-Laplace of exponent 
$\alpha$, $0 < \alpha\leq 2$, if $g(t)=ag(bt)$ for some constants
$a$ and $b$, $0 < b < 1 < a$ and $\alpha$ is the unique solution of $ab^\alpha=1$, $b$ is called the order of the 
semi-$\alpha$-Laplace law.
\end{dfn}

\noindent\textbf{Note.} 
If $b_1$ and $b_2$ are orders of $F$ such that $\log b_1/\log b_2$ is irrational, then $g(t)=c|t|^\alpha$ for some constant $c>0$.

\begin{rem} 
The result and discussion in Kagan \textit{et al.} 
\cite[p.323,324]{ka-et-73} is relevant in this context.
\end{rem}

This study is motivated by the one-to-one correspondence between ID and
GID laws (see the equvalent of definition \ref{dfn-1.1}) and extend 
concepts of attraction and partial attraction to the class of GID laws.

It is known that the class of GID laws is a proper subclass of the class of
ID laws, Sandhya \cite{sa-91a}. The idea of geometric compounding is related to $p$-thinning of point processes which has applications in the study of patterns of occurrences of crimes and accidents as many of them go unreported. When we consider $p$-thinning of renewal processes for 
every $p\in (0,1)$, we get Cox and renewal processes, which are used in modelling occurrences of claims in insurence business, Grandall 
\cite{gr-91}. Sandhya \cite{sa-91b} identiﬁed the class of renewal 
processes, invariant under $p$-thinning, as those with semi-Mittag-Leffler laws as the interval distribution and observed that, a renewal process is Cox if and only if its interval distribution is GID. Aspects of geometric compounding and GID laws in system reliability studies are explored in Pillai and Sandhya \cite{pisa-96}. Since exponential mixtures are GID (Pillai and Sandhya \cite{pisa-90}), construction of distributions
useful in situations mentioned above is easier.

In section 2 the ideas of geometric version of an ID law and geometric attraction are introduced and studied. The notion of partial geometric attraction is developed in section 3.

A detailed presentation of these ideas and a generalization of GID laws to
$\nu$-ID laws are available in Sandhya \cite{sa-91a}. For a different approach to geometric attraction see Mohan \textit{et al.} 
\cite{mo-et-93}. Klebanov and Rachev \cite{klra-96} develop 
$\nu$-infinite divisibility, define geometric attraction for random vectors and the domains of attraction of $\nu$-stable random vectors. 
Kozubowski and Rachev \cite{kora-99} discuss multivariate geometric stable laws. Semigroup related to random stability was studied by 
Bunge \cite{bu-96}.

\section{Geometric Version and Geometric Attraction} 
\begin{dfn}\label{dfn-2.1} 
Let $F$ be an ID d.f with c.f $\exp\{-g(t)\}$. Then the d.f $G$ is the
geometric version (g.v) of $F$ if and only if $G$ has c.f 
$1/(1 + g(t))$.
\end{dfn}

\begin{rem} 
Definition \ref{dfn-2.1} leads to a one-to-one correspondence between the
c.f’s of an ID law and its g.v. It may also be noticed that the relation between an ID law and its g.v basically is the relation between a compound Poisson law and the corresponding compound geometric law.
\end{rem}

\begin{rem} 
The g.v of an ID law is GID and hence ID.
\end{rem}

\begin{thm} 
A d.f $G$ is the g.v of an ID law $F$ with c.f $\exp\{-g(t)\}$, if and
only if $G$ is the limit law of geometric sums of the form 
$U_n=X_{1,n}+\ldots+X_{N_n,n}$ where $N_n$ a geometric r.v with mean $n$, is independent of $\{X_{i,n}\}$ and $\{X_{i,n}\}$ is i.i.d with 
c.f $\exp\{-g(t)/n\}$.
\end{thm}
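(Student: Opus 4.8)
The plan is to reduce both directions of the equivalence to a single pointwise limit computation for characteristic functions, after which the continuity theorem does the rest. Since $G$ is the g.v of $F$ precisely when its c.f is $1/(1+g(t))$, it suffices to show that the c.f of $U_n$ converges pointwise to $1/(1+g(t))$.

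First I would write down the c.f of $U_n$. Conditioning on $N_n$ and using independence, the c.f of a geometric compound is the probability generating function (p.g.f) of $N_n$ evaluated at the common c.f $\psi_n(t)=\exp\{-g(t)/n\}$ of the summands. Since $N_n$ is geometric with mean $n$, i.e. with success probability $p=1/n$ on $\{1,2,\dots\}$, its p.g.f is $P_{N_n}(z)=(z/n)/\{1-(1-1/n)z\}$. Substituting $z=\psi_n(t)$ gives
\[
\phi_n(t)=\frac{(1/n)\,e^{-g(t)/n}}{1-(1-1/n)\,e^{-g(t)/n}}.
\]

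Next I would let $n\to\infty$ with $t$ fixed. Expanding $e^{-g(t)/n}=1-g(t)/n+o(1/n)$, the numerator is $1/n+o(1/n)$ while the denominator equals $\{1+g(t)\}/n+o(1/n)$; dividing and cancelling $1/n$ yields $\phi_n(t)\to 1/\{1+g(t)\}$ for every $t$. This is exactly the c.f attached to the g.v of $F$ in Definition \ref{dfn-2.1}.

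Finally, I would invoke the continuity theorem. Because the g.v of an ID law is itself ID (by the remark above), $1/\{1+g(t)\}$ is a genuine c.f, hence continuous at $t=0$; the pointwise convergence $\phi_n\to 1/\{1+g\}$ therefore forces $U_n$ to converge in distribution to the law $G$ with that c.f. This proves the ``only if'' direction (construct the $U_n$ from $F$ and read off the limit) and, simultaneously, the ``if'' direction: any weak limit $G$ of such $U_n$ must have c.f $\lim_n\phi_n(t)=1/\{1+g(t)\}$ by uniqueness of limits, so $G$ is the g.v of $F$. I expect the only delicate point to be the bookkeeping in the limit---ensuring the $o(1/n)$ terms in numerator and denominator are controlled for each fixed $t$---together with the standard check that the limit is continuous at the origin so that the continuity theorem applies; both are routine once the explicit form of $\phi_n$ is in hand.
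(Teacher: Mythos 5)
Your proposal is correct and follows the same route as the paper: the paper's proof simply asserts that $\phi_{U_n}(t)\to 1/(1+g(t))$ and obtains the converse by retracing steps, while you supply the explicit p.g.f.\ computation, the expansion of $e^{-g(t)/n}$, and the appeal to the continuity theorem that the paper leaves implicit. No gap; your write-up is just a fuller version of the published argument.
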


\begin{proof}
If $\phi_{U_n}(t)$ denotes the c.f of $U_n$, then we have,
$$
\phi_{U_n}(t) \rightarrow 1/(1+ g(t)) \quad \text{as}\quad 
n\rightarrow \infty.
$$
The converse is obtained by retracing the steps.
\end{proof}

\begin{dfn} 
A d.f $F$ is said to be geometrically attracted to another d.f $G$, if
$G$ is the limit law of a sum 
\begin{equation}\label{eq-2.1}
Y_n= B_n^{-1}\{X_1+\ldots+ X_{Np_n}\}
\end{equation}
where $Np_n$, which is geometric with mean $1/p_n$, is independent of 
$\{X_i\}$ which are i.i.d as $F,B_n > 0$ and as $n\rightarrow\infty$,
$p_n\rightarrow 0$ and $B_n\rightarrow\infty$.
\end{dfn}

\begin{dfn} 
The set of all d.f’s that are geometrically attracted to $G$ is called
the domain of geometric attraction (d.g.a) of $G$.
\end{dfn}

\begin{thm} 
Every GSS law is geometrically attracted to itself. Conversely,
a d.f $F$ belongs to the d.g.a of a GSS law provided the d.f of 
$Y_n$ in \eqref{eq-2.1} with 
$B_n=p^{-n/\alpha}$, $0 <\alpha \leq 2$, and 
$p_n=p^n$ tends to a limit for two values of $p$ say $p_1$ and 
$p_2$, such that $\log p_1/\log p_2$ is irrational.
\end{thm}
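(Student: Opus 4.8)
The plan is to push the whole problem through the reciprocal transform. Write $\psi$ for the c.f.\ of $F$ and set $h(t)=1/\psi(t)-1$. Using the probability generating function $pz/\{1-(1-p)z\}$ of a geometric variable with mean $1/p$, a short computation gives the c.f.\ of the geometric sum as $1/(1+h(t)/p)$, so after the scaling by $B_n^{-1}=p^{n/\alpha}$ and with $p_n=p^{n}$ one obtains the compact form
\[
\phi_{Y_n}(t)=\frac{1}{\,1+h\!\left(tp^{n/\alpha}\right)/p^{n}\,}.
\]
The forward assertion falls out at once: if $G$ is GSS with c.f.\ $1/(1+c|t|^{\alpha})$ and we take $F=G$, then $h(t)=c|t|^{\alpha}$, whence $h(tp^{n/\alpha})/p^{n}=c|t|^{\alpha}$ and $\phi_{Y_n}(t)\equiv 1/(1+c|t|^{\alpha})$ for every $n$. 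Thus $Y_n\overset{d}{=}G$ for all $n$, while $p_n\to0$ and $B_n\to\infty$, so $G$ lies in its own d.g.a.

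For the converse I would first convert the hypothesis into an asymptotic statement about $h$ near the origin. Fix one of the two values $p$ and put $b=p^{1/\alpha}$, so that $p^{n/\alpha}=b^{n}$ and $p^{n}=b^{n\alpha}$. By the continuity theorem, $Y_n$ converges to a law with c.f.\ $1/(1+g(t))$ if and only if
\[
\frac{h(b^{n}t)}{b^{n\alpha}}\longrightarrow g(t)\qquad(n\to\infty);
\]
re-indexing $n\mapsto n+1$ inside this limit yields $g(bt)=b^{\alpha}g(t)$, i.e.\ the limit law is the semi-$\alpha$-Laplace law of order $b=p^{1/\alpha}$ (with $a=1/p$, so $ab^{\alpha}=1$). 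Carrying this out for $p_1$ and $p_2$ produces limit transforms $g_1,g_2$, semi-$\alpha$-Laplace of orders $b_1=p_1^{1/\alpha}$ and $b_2=p_2^{1/\alpha}$, both extracted from the \emph{same} function $h$.

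The decisive step is to collapse $g_1$ and $g_2$ into a single homogeneous transform $c|t|^{\alpha}$, after which the common limit law has c.f.\ $1/(1+c|t|^{\alpha})$, is GSS, and $F$ sits in its d.g.a.\ under the admissible norming $B_n=p_1^{-n/\alpha}$, $p_n=p_1^{n}$ (for which $p_n\to0$ and $B_n\to\infty$). This is where the arithmetic hypothesis does its work. Each relation $g_i(b_it)=b_i^{\alpha}g_i(t)$ only determines $h$ up to a log-periodic factor near $0$: writing $h(x)\sim|x|^{\alpha}\eta(\log|x|)$, the existence of the $p_i$-limit forces the same envelope $\eta$ to be asymptotically periodic with period $|\log b_i|$, for each $i$. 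Since $\log p_1/\log p_2=\log b_1/\log b_2$ is irrational, the group $\{b_1^{m}b_2^{k}:m,k\in\mathbb{Z}\}$ is dense in $(0,\infty)$, so $\eta$ is asymptotically periodic with two incommensurable periods and must therefore be constant; equivalently $g(\lambda t)=\lambda^{\alpha}g(t)$ holds on a dense set of $\lambda$, and continuity of $\psi$ (hence of $h$ and $g$) extends it to all $\lambda>0$, giving $g(t)=c|t|^{\alpha}$. This is precisely the mechanism of the Note following the definition of semi-$\alpha$-Laplace laws (cf.\ \cite{ka-et-73}), which I would invoke rather than reprove. I expect this collapse to be the main obstacle: promoting convergence along the two geometric grids $\{b_1^{n}\}$ and $\{b_2^{n}\}$ to genuine regular variation of $h$ at the origin, i.e.\ excluding a nonconstant oscillation that is invisible to each grid separately but is annihilated by their incommensurability.
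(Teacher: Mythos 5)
Your proposal follows essentially the same route as the paper: pass to the reciprocal transform $h=1/\psi-1$, observe that the geometric sum has c.f.\ $1/(1+h(tp^{n/\alpha})/p^{n})$, extract the functional equation $g_1(p^{1/\alpha}t)=p\,g_1(t)$ by re-indexing $n\mapsto n+1$, and invoke the Kagan--Linnik--Rao argument to use the irrationality of $\log p_1/\log p_2$ to force $g_1(t)=c|t|^{\alpha}$. The only difference worth noting is in the forward direction, where you assume the symmetric (Linnik) form $1/(1+c|t|^{\alpha})$ for a GSS c.f., whereas the paper derives the needed homogeneity $g(p^{1/\alpha}t)=p\,g(t)$ directly from the definition of GSS by iterating $\phi(t)=p\phi(ct)/\{1-(1-p)\phi(ct)\}$, which also covers non-symmetric GSS laws; your computation goes through verbatim once $c|t|^{\alpha}$ is replaced by any $g$ positively homogeneous of degree $\alpha$.
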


\begin{proof}
Let $F$ be GSS. By a little algebra it follows from 
definition \ref{dfn-1.2} that its c.f $\phi(t)$ satisfies, 
for every $p\in (0,1)$,
$$
\phi(t) = \frac{p\phi(ct)}{1-q\phi(ct)},\quad
q=1-p,
$$
with $c=p^{1/\alpha}$, $0 < \alpha\leq 2$. And on iteration,
$$
\phi(t) = \frac{p^n\phi(p^{n/\alpha}t)}{1-(1-p^n)\phi(p^{n/\alpha}t)}.
$$
The right hand side corresponds to the c.f of $Y_n$ in 
\eqref{eq-2.1} with $B_n=p^{-n/\alpha}$ and 
$p_n=p^n$, for all $n>1$ and hence $F$ is geometrically attracted 
to itself. Conversely,
if $Y_n=p^{n/\alpha}\{X_1+\ldots+X_{Np^n}\}$, then its c.f is
$$
\phi_{Y_n}(t) = \frac{1}{1+p^{-n} g(p^{n/\alpha}t)}
$$
where $g=(1/\phi)-1$, $\phi$ being the c.f of $X_1$. 
Suppose $\phi_{Y_n}(t)\rightarrow 1/(1+g_1(t))$, then
$$
\phi_{Y_{n+1}}(t)\rightarrow \frac{1}{1+p^{-1} g_1(p^{1/\alpha}t)}=
\frac{1}{1+g_1(t)}.
$$
Hence,

\begin{equation}\label{eq-2.2}
g_1(t) = p^{-1} g_1(p^{1/\alpha}t).
\end{equation}

If for two values of $p$, say $p_1$ and $p_2$, such that 
$\log p_1/\log p_2$ is irrational, \eqref{eq-2.2} is
satisﬁed then by Kagan \textit{et al.} \cite[p.324]{ka-et-73}, 
the converse follows.
\end{proof}

With $[x]$ denoting the integer part of $x$, we have;

\begin{thm} 
A necessary condition that $F$ is in the d.g.a of $G$ which is
GSS, is that $[1/p_n]=n$. Conversely, if $F$ 
is in the d.g.a of another law $G$ and $[1/p_n]=n$, then $G$ is GSS.
\end{thm}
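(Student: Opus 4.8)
The plan is to convert the geometric-summation convergence in \eqref{eq-2.1} into a statement about \emph{ordinary} sums, letting the mean number of summands $1/p_n$ take over the role of the deterministic number of terms. First I would compute the c.f.\ of $Y_n$. Writing $\phi$ for the c.f.\ of $F$ and using the geometric p.g.f., one gets $\phi_{Y_n}(t)=p_n\phi(t/B_n)/\{1-(1-p_n)\phi(t/B_n)\}$, so that $1/\phi_{Y_n}(t)-1=\{1-\phi(t/B_n)\}/\{p_n\phi(t/B_n)\}$. Since $B_n\to\infty$ forces $\phi(t/B_n)\to 1$, the convergence $Y_n\overset{d}{\to}G$, where $G$ has c.f.\ $1/(1+g(t))$, is equivalent to
\[
\frac{1}{p_n}\{1-\phi(t/B_n)\}\to g(t). \qquad (\star)
\]

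Next I would place $(\star)$ beside the classical domain-of-attraction criterion for ordinary sums: with $S_k=X_1+\cdots+X_k$ normalized by $a_k$, one has $S_k/a_k\overset{d}{\to}$ the ID (here stable) law with c.f.\ $\exp\{-g(t)\}$ if and only if $k\{1-\phi(t/a_k)\}\to g(t)$. Thus $(\star)$ is \emph{literally} this condition with the effective term-count $1/p_n$ in place of $k$ and $B_n$ in place of $a_k$; this is the geometric analogue of Gnedenko's transfer theorem, reflecting that $p N_p\overset{d}{\to}\text{Exp}(1)$ and that the exponential mixture $E^{1/\alpha}S$ of a strictly stable $S$ is exactly the GSS law with c.f.\ $1/(1+g(t))$. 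For necessity, $G$ being GSS means (cf.\ the previous theorem and Kagan \textit{et al.}\ \cite{ka-et-73}) that $g(t)=c|t|^\alpha$, so $F$ lies in the ordinary domain of attraction of the $\alpha$-stable law and the normalizers $a_k$ are regularly varying of index $1/\alpha$. The transfer theorem then identifies the correct geometric normalizer as $a_{[1/p_n]}$; matching this against the limit indexed by $n$ (i.e.\ taking $B_n$ to be the canonical stable normalizer $a_n$) forces the effective term-count to equal the step index, $[1/p_n]=n$.

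For the converse, assume $F$ is in the d.g.a of $G$ and $[1/p_n]=n$. Then $1/p_n=n+O(1)$, so $(\star)$ becomes $n\{1-\phi(t/B_n)\}\to g(t)$, which is precisely the ordinary domain-of-attraction condition with the integer term-count $n$. Hence $F$ belongs to the ordinary domain of attraction of the ID law with c.f.\ $\exp\{-g(t)\}$; but an ordinary domain of attraction is nonempty only for a stable limit, so $g(t)=c|t|^\alpha$. Consequently the g.v.\ $G$ has c.f.\ $1/(1+c|t|^\alpha)$ and satisfies the scaling equation \eqref{eq-2.2} for every $p$, i.e.\ $G$ is GSS.

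I expect the main obstacle to be the rigorous handling of the integer part $[1/p_n]$ together with the identification of $1/p_n$ as the effective number of summands: this requires the uniqueness, up to the convergence-of-types theorem, of regularly varying normalizers, so that $(\star)$ and the ordinary criterion cannot hold with incompatible indexings. A secondary point worth isolating is why $[1/p_n]=n$ for \emph{every} $n$ yields full stability rather than mere semi-stability; were the relation to hold only along a subsequence, one would land in the semi-$\alpha$-Laplace (partial-attraction) class of Section~3, and it is exactly the term-by-term matching with all integers $n$ that upgrades the limit to a genuinely GSS law.
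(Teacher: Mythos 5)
Your proposal follows essentially the same route as the paper: compute the characteristic function of the geometric sum, reduce the convergence $Y_n\overset{d}{\to}G$ to the classical attraction criterion $p_n^{-1}\{1-\phi(t/B_n)\}\to g_1(t)$ (the paper writes this as $[1/p_n]\,g(t/B_n)\to g_1(t)$ with $g=1/\phi-1$ and then exponentiates to $\exp\{-[1/p_n]g(t/B_n)\}\to\exp\{-g_1(t)\}$), and invoke the correspondence between GSS limits of geometric sums and strictly stable limits of ordinary sums to identify $[1/p_n]$ with the term count $n$. The additional detail you supply on the transfer theorem and the regular variation of the normalizers only makes explicit what the paper leaves implicit in the phrase ``which is strictly stable.''
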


\begin{proof}
Let $F$ belong to the d.g.a of $G$. In terms 
of the corresponding c.f’s we have, 
$$
\frac{1}{1+p^{-1}_ng(t/B_n)} \rightarrow
\frac{1}{1+g_1(t)}.
$$
That is,
$$
\{[1/p_n]+\theta_n\} g(t/B_n) \rightarrow g_1(t)
$$
where $0\leq \theta_n=p^{-1}_n-[p^{-1}_n]\leq 1$. Hence
$$
[1/p_n] g(t/B_n)\rightarrow g_1(t).
$$
This implies that $[1/p_n]=n$, since now
$$
\exp\{-[1/p_n]g(t/B_n)\} \rightarrow \exp \{-g_1(t)\}
$$
which is strictly stable. Converse follows by retracing the steps.
\end{proof}

\section{Partial Geometric Attraction}
\begin{dfn} 
A d.f $F$ is said to be partially geometrically attracted to another 
d.f $G$ if there exists a subsequence $k_1< k_2<\ldots< k_n<\ldots$ 
of the set of positive integers such that $Y_{k_n}\rightarrow Y$ 
in law, where $Y_n$ is defined in \eqref{eq-2.1} and $G$ is the d.f
of $Y$.
\end{dfn}

\begin{dfn} 
The set of all d.f’s that are partially geometrically attracted to
a d.f $G$ is called the domain of partial geometric attraction (d.p.g.a) 
of $G$.
\end{dfn}

\begin{thm} 
Corresponding to every semi-$\alpha$-Laplace law there exists an
ID law that is partially geometrically attracted to it. Conversely, 
if an ID law is partially geometrically attracted to some law $G$ 
with $k_{n+1}/k_n\rightarrow a$, and $B_{k_{n+1}}/B_{k_n}\rightarrow 1/b$, then $G$ is semi-$\alpha$-Laplace of order $b$ and $ab^\alpha=1$.
\end{thm}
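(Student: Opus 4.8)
For the direct part, let $G$ be semi-$\alpha$-Laplace with c.f $1/(1+g)$, where $g(t)=ag(bt)$, $0<b<1<a$, and $ab^\alpha=1$. Since $G$ is GID, $\exp\{-g(t)\}$ is a genuine ID c.f; let $F$ be the corresponding ID law, so that $G$ is the g.v of $F$ in the sense of Definition \ref{dfn-2.1}. The plan is to exhibit explicit norming constants making the geometric sums built from $F$ converge to $G$. Writing $g_F=1/\phi_F-1=\exp\{g\}-1$, the c.f of $Y_n$ in \eqref{eq-2.1} has the exact form $1/(1+p_n^{-1}g_F(t/B_n))$, exactly as in the proofs above. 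I would take $B_n=b^{-n}$ and $p_n=a^{-n}$; then $p_n\to0$, $B_n\to\infty$, and iterating the functional equation gives $g(t/B_n)=g(b^n t)=a^{-n}g(t)$. Hence $p_n^{-1}g_F(t/B_n)=a^n\bigl(\exp\{a^{-n}g(t)\}-1\bigr)\to g(t)$, so $\phi_{Y_n}(t)\to 1/(1+g(t))$ and $F$ is (fully, hence partially) geometrically attracted to $G$. The only delicate point is that $\exp\{-g\}$ is truly ID, which is precisely the assertion that the semi-$\alpha$-Laplace law is GID.

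For the converse, let $F$ be ID with c.f $\exp\{-g_0(t)\}$ and put $g=1/\phi_F-1=\exp\{g_0\}-1$, so that $\phi_{Y_{k_n}}(t)=1/(1+p_{k_n}^{-1}g(t/B_{k_n}))$. From $Y_{k_n}\to Y$ with $Y\sim G$ I would first invert to read off $p_{k_n}^{-1}g(t/B_{k_n})\to g_1(t)$, where $\phi_G=1/(1+g_1)$. Replacing $p_{k_n}^{-1}$ by its integer part costs a term bounded by $g(t/B_{k_n})\to0$, so $[1/p_{k_n}]\,g(t/B_{k_n})\to g_1(t)$; and exactly as in the theorem giving the necessary condition $[1/p_n]=n$, the resulting (semi)stable limit forces $[1/p_{k_n}]=k_n$, so that $Y_{k_n}$ is effectively a sum of $k_n$ summands. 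Since $t/B_{k_n}\to0$ and $g(s)\sim g_0(s)$ near the origin, one has $k_n\bigl(g(t/B_{k_n})-g_0(t/B_{k_n})\bigr)\to0$, whence $\phi_F(t/B_{k_n})^{k_n}\to\exp\{-g_1(t)\}$; in particular $\exp\{-g_1\}$ is an ID c.f.

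The heart of the matter is then a convergence-of-types argument in the spirit of Kagan \textit{et al.} \cite[p.324]{ka-et-73}. Writing $s=t\,B_{k_n}/B_{k_{n+1}}$, so that $s\to bt$ by hypothesis, I would compute $\phi_F(t/B_{k_{n+1}})^{k_{n+1}}=\bigl(\phi_F(s/B_{k_n})^{k_n}\bigr)^{k_{n+1}/k_n}$ and let $n\to\infty$. Using the local uniform convergence $\phi_F(\cdot/B_{k_n})^{k_n}\to\exp\{-g_1\}$, continuity of $g_1$, and $k_{n+1}/k_n\to a$, the right-hand side tends to $\exp\{-a\,g_1(bt)\}$, while the left-hand side tends to $\exp\{-g_1(t)\}$. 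Equating limits yields the functional equation $g_1(t)=a\,g_1(bt)$ with $0<b<1<a$, which is precisely the defining relation of a semi-$\alpha$-Laplace law; thus $G$ is semi-$\alpha$-Laplace of order $b$, and $\alpha$ is the unique solution of $ab^\alpha=1$, with $\alpha\in(0,2]$ because $\exp\{-g_1\}$ is a bona fide ID, indeed semistable, c.f.

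The step I expect to be the main obstacle is this last one: one must upgrade the pointwise convergence of the characteristic functions to local uniform convergence and control the substitution $s\to bt$ inside the $k_n$-th power, and one must justify taking logarithms—choosing the continuous branch vanishing at the origin—to pass from the identity of the limiting c.f.'s to the functional equation for $g_1$. Establishing $[1/p_{k_n}]=k_n$ cleanly, so that the ratio $a$ supplied by $k_{n+1}/k_n$ is genuinely the ratio of the numbers of summands, is the other point that requires care.
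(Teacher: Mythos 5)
Your proof is correct (at the paper's own level of rigour) but follows a genuinely different route in both directions. For the direct part, the paper takes the witness to be $G$ itself: with summands distributed as $G$ (c.f $1/(1+g)$), the geometric sum $Y_{k_n}=b^n\{X_1+\cdots+X_{N_{1/[a^n]}}\}$ has c.f $\{1+[a^n]g(b^nt)\}^{-1}$ \emph{exactly}, and the error against $\{1+a^ng(b^nt)\}^{-1}=\phi(t)$ is controlled by $\theta_n g(b^nt)\to0$; this needs no appeal to $e^{-g}$ being ID, only that $G$ is ID. You instead take the ``parent'' law with c.f $e^{-g}$ and show full geometric attraction with $p_n=a^{-n}$, $B_n=b^{-n}$ via $a^n(e^{a^{-n}g(t)}-1)\to g(t)$ --- equally valid, and it identifies the attracted law as the semi-stable law of which $G$ is the g.v, at the cost of the GID input you flag. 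For the converse, the paper simply declares ``choose $k_n=[a^n]$, $B_{k_n}=b^{-n}$'' (a normalization not actually granted by the ratio hypotheses $k_{n+1}/k_n\to a$, $B_{k_{n+1}}/B_{k_n}\to 1/b$) and then derives $g_1(t)=ag_1(bt)$ by splitting $[a^{n+1}]=[a^n]+a^n(a-1)+\theta_n-\theta_{n+1}$ inside the geometric-compound c.f. Your convergence-of-types argument on $\phi_F(\cdot/B_{k_n})^{k_n}\to e^{-g_1}$ works directly from the ratio hypotheses and is therefore cleaner on precisely the point the paper glosses over; the price is the detour through the classical triangular-array setting (the step $k_n(g-g_0)(t/B_{k_n})\to0$ and the distinguished-logarithm care you mention), which could be avoided by running the same types argument directly on $[1/p_{k_n}]\,g(t/B_{k_n})\to g_1(t)$ in the form $1/(1+\cdot)$. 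Your explicit justification of $[1/p_{k_n}]=k_n$ via the Theorem 2.3 mechanism addresses a gap the paper leaves implicit, though it inherits that theorem's own hand-waving.
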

\begin{proof}
Suppose $Y$ is semi-$\alpha$-Laplace with d.f $G$. Hence its 
c.f $\phi$ satisfies
$$
\phi(t) = \frac{1}{1+g(t)} = 
\frac{1}{1+a^n g(b^n t)}, 0<b<1<a.
$$
Choose a subsequence $\{k_n\}$ such that $k_n=[a^n]$ and 
$B_{k_n}=b^{-n}$ for all $k$ and $n$, positive integers. Let
$$
\phi_{k_n}(t)=\{1+[a_n]g(b^nt)\}^{-1}
$$
and put $\theta_n=a^n-[a^n]$. Now,
$$
|\phi_{k_n}(t)-\phi(t)|=|\theta_n g(b^nt)||\phi_{k_n}(t)|
|\phi(t)|.
$$
Since $\phi_{k_n}(t)$ and $\phi(t)$ are c.f’s
$$
|\phi_{k_n}(t)| |\phi(t)|\leq 1
$$
and $g(b^nt)\rightarrow 0$ as $n\rightarrow\infty$ and hence 
$\phi_{k_n}(t)\rightarrow \phi(t)$. But $\phi_{k_n}(t)$ is the g.v of
$\exp\{-[a^n]g(b^nt)\}$ and it corresponds to the r.v 
$Y_{k_n}=b^n\{X_1+\ldots+X_{N_{1/[a^n]}}\}$ and $Y_{k_n}\rightarrow Y$ in law.

Conversely, suppose that an ID law $F$ is in the d.p.g.a of $G$ 
with $k_{n+1}/k_n\rightarrow a$ and $B_{k_{n+1}}/B_{k_n}\rightarrow 1/b$. Choose $k_n=[a^n]$ and $B_{k_n}=1/b^n$ for all $k$ and $n$ positive
integers. That is,
$$
\{1+[a^n] g(b^nt))\}^{-1}\rightarrow 
\{1+g_1(t)\}^{-1},
$$
where the L.H.S is the c.f of $Y_{k_n}$ and the R.H.S that of $G$. 
Now the c.f of $Y_{k_{n+1}}$ is
$$
\{1+[a^{n+1}] g(b^{n+1}t)\}^{-1}
$$
and
\begin{align*}
\lim_{n\rightarrow\infty}
[a^{n+1}]g(b^{n+1}t) & = \lim_{n\rightarrow\infty}
[a^n]g(b^{n+1}t) + \lim_{n\rightarrow\infty} 
\{a^n(a-1)+\theta_n-\theta_{n+1}\}g(b^{n+1}t)\\
& = g_1(bt) + (a-1) g_1(bt)\\
& = ag_1(bt).
\end{align*}
That is, $\{1+g_1(t)\}^{-1}=\{1+ag_1(bt)\}^{-1}$, implying $G$ 
is semi-$\alpha$-Laplace, and the proof is complete. 
\end{proof}

\begin{thm} 
An ID law $F$ is in the domain of partial attraction of a semi-stable 
law $G$ having order $b > 0$, if and only if its g.v is in the 
d.p.g.a of a semi-$\alpha$-Laplace law which is the g.v of $G$.
\end{thm}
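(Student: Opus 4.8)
The plan is to prove both implications by translating each partial-attraction statement into a single analytic condition on the exponent of $F$, using the geometric-version correspondence of Definition \ref{dfn-2.1}. Write the c.f. of the ID law $F$ as $\exp\{-\psi(t)\}$, so that its g.v. has c.f. $\{1+\psi(t)\}^{-1}$, and write the semi-stable law $G$ of order $b$ as $\exp\{-g(t)\}$, where $g$ satisfies $g(t)=ag(bt)$ with $ab^\alpha=1$ and $0<b<1<a$. By Definition \ref{dfn-2.1} the g.v. of $G$ has c.f. $\{1+g(t)\}^{-1}$, and since it carries the same $g$ it is exactly semi-$\alpha$-Laplace of order $b$; thus the structural correspondence ``semi-stable $\leftrightarrow$ semi-$\alpha$-Laplace, order $b$ preserved'' is built into the g.v. map and requires no extra analytic work.

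Next I would record the two convergence conditions. On the ordinary side, $F$ lies in the domain of partial attraction of $G$ precisely when there is a subsequence $\{k_n\}$ and norming constants $B_{k_n}\to\infty$ with $\exp\{-k_n\psi(t/B_{k_n})\}\to\exp\{-g(t)\}$, i.e. $k_n\psi(t/B_{k_n})\to g(t)$. On the geometric side, the computation already carried out for the d.g.a. gives the c.f. of $Y_n=B_n^{-1}\{X_1+\ldots+X_{N_{p_n}}\}$, built from i.i.d. copies of the g.v. of $F$, as $\{1+p_n^{-1}\psi(t/B_n)\}^{-1}$. Hence the g.v. of $F$ lies in the d.p.g.a. of the g.v. of $G$ precisely when, along some subsequence, $p_n^{-1}\psi(t/B_n)\to g(t)$; and since $\psi(t/B_n)\to0$ the fractional part is negligible, so this is equivalent to $[1/p_n]\psi(t/B_n)\to g(t)$, exactly the reduction used in the earlier d.g.a. theorem.

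The two conditions are then identified through the correspondence $k_n\leftrightarrow[1/p_n]$. For the forward implication I take the ordinary subsequence $\{k_n\}$ and constants $B_{k_n}$ and set $p_{k_n}=1/k_n$ (choosing $p_m\to0$, $B_m\to\infty$ arbitrarily off the subsequence); then $[1/p_{k_n}]=k_n$ and $p_{k_n}^{-1}\psi(t/B_{k_n})=k_n\psi(t/B_{k_n})\to g(t)$, so $Y_{k_n}$ converges in law to the g.v. of $G$. For the converse I start from the geometric subsequence, put $m_n=[1/p_{k_n}]$, note that $m_n$ are integers tending to $\infty$ because $p_{k_n}\to0$ (passing to a further subsequence to ensure strict monotonicity if needed), and read off $m_n\psi(t/B_{k_n})\to g(t)$, whence $\exp\{-m_n\psi(t/B_{k_n})\}\to\exp\{-g(t)\}$; this says the ordinary sums of $m_n$ i.i.d. copies of $F$ converge to $G$, so $F$ is in its domain of partial attraction. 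In both directions the limit c.f.'s $\exp\{-g\}$ and $\{1+g\}^{-1}$ are geometric versions of one another and share the same $g$, so the order $b$ is matched throughout.

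The only delicate points are the passage from $p_n^{-1}$ to its integer part $[1/p_n]$ and the bookkeeping of the ``there exists a subsequence'' quantifiers together with the freedom to prescribe $p_n$ and $B_n$; the former is controlled exactly as before, since $\psi(t/B_n)\to0$ for each fixed $t$ makes $\theta_n\psi(t/B_n)\to0$ for the fractional part $\theta_n=p_n^{-1}-[p_n^{-1}]\in[0,1]$. I expect this integer-part reduction, rather than any genuinely new estimate, to be the main (and only mild) obstacle, because the substantive structural fact --- that the geometric-version map carries semi-stable laws of order $b$ onto semi-$\alpha$-Laplace laws of the same order --- is already delivered by Definition \ref{dfn-2.1} and the shared functional equation $g(t)=ag(bt)$.
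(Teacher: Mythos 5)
Your proposal is correct and is exactly the argument the paper has in mind: the paper's own ``proof'' is the single sentence that the result ``follows from the definitions of partial attraction, partial geometric attraction and g.v of an ID law,'' and what you have written is the honest expansion of that sentence --- matching $k_n\psi(t/B_{k_n})\to g(t)$ with $p_{k_n}^{-1}\psi(t/B_{k_n})\to g(t)$ via $k_n\leftrightarrow[1/p_n]$, disposing of the fractional part because $\psi(t/B_{k_n})\to 0$, and observing that the g.v.\ map sends $\exp\{-g\}$ with $g(t)=ag(bt)$ to the semi-$\alpha$-Laplace c.f.\ $\{1+g\}^{-1}$ of the same order $b$. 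You in fact supply more detail than the paper does (the integer-part estimate and the subsequence bookkeeping), and no step of yours conflicts with the paper's conventions, which, like yours, work throughout without centering constants.
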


\begin{proof}
Follows from the definitions of partial attraction, partial geometric
attraction and g.v of an ID law.
\end{proof}

Gnedenko and Kolmogorov \cite{gnko-68} gave a transitive relation for a 
law to belong to the domain of partial attraction of another law. An analogous result for d.p.g.a is given below, the proof of which is straight forward.

\begin{thm} 
If a law $F'$ is in the d.p.g.a of the law $G'$, and $G'$ is in the
d.p.g.a of the law $H'$, then $F'$ is in the d.p.g.a of the law 
$H'$, if $F'$, $G'$ and $H'$ are the g.v’s of some ID laws 
$F,G$ and $H$ respectively.
\end{thm}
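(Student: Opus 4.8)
The plan is to exploit the one-to-one correspondence between an ID law and its geometric version (Definition \ref{dfn-2.1}) to transfer the statement to the classical domain of partial attraction, where transitivity is already available from Gnedenko and Kolmogorov \cite{gnko-68}. First I would record the c.f. of a normalized geometric sum: if $X_i$ has c.f. $1/(1+g(t))$, so that $X_i$ is distributed as the g.v. $F'$ of the ID law $F$ with c.f. $\exp\{-g(t)\}$, then the c.f. of $Y_n=B_n^{-1}\{X_1+\ldots+X_{N_{p_n}}\}$ in \eqref{eq-2.1} is
$$
\phi_{Y_n}(t)=\frac{1}{1+p_n^{-1}g(t/B_n)},
$$
exactly as in the proofs of the preceding theorems. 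Since $x\mapsto 1/(1+x)$ is continuous and the c.f. correspondence is one-to-one, convergence $Y_{k_n}\to Y$ with $Y$ distributed as $G'$ (c.f. $1/(1+g_1(t))$) along a subsequence $\{k_n\}$ holds if and only if $p_{k_n}^{-1}g(t/B_{k_n})\to g_1(t)$, equivalently $\exp\{-p_{k_n}^{-1}g(t/B_{k_n})\}\to\exp\{-g_1(t)\}$.

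The key step is the resulting equivalence, proved by the same reasoning as the theorem relating partial attraction of $F$ to d.p.g.a. of its g.v.: with the norming matched so that $p_{k_n}^{-1}$ plays the role of the number of classical summands (as forced by the condition $[1/p_n]=n$ appearing earlier), $F'$ lies in the d.p.g.a. of $G'$ if and only if the underlying ID law $F$ lies in the classical domain of partial attraction of $G$. Granting this, the hypotheses $F'\in \text{d.p.g.a.}(G')$ and $G'\in \text{d.p.g.a.}(H')$ translate into $F$ being in the domain of partial attraction of $G$ and $G$ being in the domain of partial attraction of $H$.

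By the transitivity relation of \cite{gnko-68}, $F$ is then in the domain of partial attraction of $H$; translating back through the same correspondence yields $F'\in \text{d.p.g.a.}(H')$, which is the assertion. The hypothesis that $F',G',H'$ are g.v.'s of ID laws is precisely what makes the translation available in both directions, so the argument closes.

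The main obstacle is the bookkeeping of subsequences and norming constants. The two attraction chains supply, a priori, distinct subsequences and distinct $B$-sequences, so to invoke the classical transitivity cleanly one must pass to a common refinement and verify that the composed norming, together with the matched geometric means, still produces convergence on the ID side. Once the ``if and only if'' correspondence above is in place, this is exactly the bookkeeping already carried out in the Gnedenko--Kolmogorov theorem, which is why the remainder is routine.
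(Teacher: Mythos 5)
Your proposal is correct and follows exactly the route the paper intends: the paper itself offers no written proof (it declares the result ``straight forward'' immediately after citing the Gnedenko--Kolmogorov transitivity theorem \cite{gnko-68} and after establishing, in the preceding theorem, the correspondence between partial attraction of an ID law and partial geometric attraction of its g.v.), and your argument---translate both hypotheses through the g.v.\ correspondence $1/(1+g)\leftrightarrow\exp\{-g\}$, invoke classical transitivity, translate back---is precisely that straightforward proof, with the hypothesis that $F'$, $G'$, $H'$ are g.v.'s used exactly where it is needed. The points you leave implicit (replacing $1/p_{k_n}$ by its integer part $[1/p_{k_n}]$ since $\theta_n g(t/B_{k_n})\rightarrow 0$, and checking that the composed classical attraction requires no centering constants so that it can be pulled back to a geometric sum of the form \eqref{eq-2.1}) are routine and well within the level of detail the paper itself supplies.
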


\subsection*{Acknowledgements} 
The authors thank the referee and the editor for some useful suggestions. The first author's work was supported by a research fellowship from 
C.S.I.R, India.

\end{spacing}
\end{document}